\documentclass[12pt]{article}
\usepackage{amsthm, amsfonts, amsmath, amssymb}

\usepackage{enumerate, enumitem}
\newtheorem{theorem}{Theorem}[section]
\newtheorem{lemma}[theorem]{Lemma}

\newtheorem{definition}[theorem]{Definition}

\usepackage[mode=buildnew]{standalone}

\usepackage{xcolor}
    \definecolor{custom_red}{RGB}{150, 50, 50}
    \definecolor{custom_green}{RGB}{50, 150, 50}
    \definecolor{custom_blue}{RGB}{50, 50, 150}
    \definecolor{custom_cyan}{RGB}{0, 174, 239}
    \definecolor{custom_black}{RGB}{0, 0, 0}
    \definecolor{custom_purple}{RGB}{200, 150, 250}
    \definecolor{custom_textcolor}{RGB}{0, 0, 0}
\usepackage{tikz}

\title{    Adiabatic Invariant Actions for    \\Partially Integrable Systems
}
\author{*Amir Khodaeian, **Konstantinos Kourliouros, ***Dmitry Turaev
\\[1ex]
Department of Mathematics, Imperial College, London
\\[1ex]
*a.khodaeian-karim22@imperial.ac.uk
\\[1ex]
**k.kourliouros@googlemail.com
\\[1ex]
***d.turaev@imperial.ac.uk
}

\date{\today}

\begin{document}

\maketitle
\begin{abstract}
    We introduce action integrals for partially integrable Hamiltonian systems that interpolate between the non-integrable and completely integrable theory. We show that under the assumption of ergodicity of the Hamiltonian system on the common level sets of its integrals, these actions become constants of motion for the averaged dynamics and, consequently, adiabatic invariants when the Hamiltonian function is allowed to slowly change with time. 
\end{abstract}
\section{Introduction}

We consider a system of Hamiltonian differential equations with a Hamiltonian function $H$ that {\em depends slowly on time}:
    \begin{align} 
        \frac{dq}{dt} &= \partial_{p}H(q,p;\theta),        \quad
        \frac{dp}{dt} = -\;\partial_{q} H(q,p;\theta), \label{Hamiltonian_ODE}
        \\[1ex]
        &
        \frac{d\theta}{dt} = \varepsilon, \label{thetaeq}
    \end{align}
where $q\in \mathbb{R}^{n}$, $p\in \mathbb{R}^{n}$, $\theta\in \mathbb{R}^1$, the function $H$ is $C^\infty$ smooth, and $\varepsilon$ is assumed to be a small parameter. The question of the long-time behaviour of such systems lies at the origins of the theory of Hamiltonian dynamics, and of the foundations of classical and quantum mechanics, thermodynamics and statistical physics. 

The short-time behaviour (meaning on a finite interval of time $t$ in the limit $\varepsilon\to 0$) is determined by the {\em frozen system} in the $y$-space ($\mathbb{R}^{2n}$ with the standard symplectic form), given by (\ref{Hamiltonian_ODE}) with the slow variable $\theta$ taken to be constant. Throughout the paper we will consider the case where the frozen system is {\em partially integrable}, i.e. such that for each $\theta$, the number of independent single-valued first integrals of (\ref{Hamiltonian_ODE}) that stand in involution is strictly less than the number $n$ of degrees of freedom of the system. The main examples of such Hamiltonian systems are given by systems with continuous symmetries (e.g. systems with an $SO(2)$ symmetry preserve angular momentum) and systems that consist of non-interacting subsystems (then the energy of each subsystem is an integral); there are many other non-trivial examples. 

Recall that an integral for the frozen system is a smooth function $Q:\mathbb{R}^{2n}\rightarrow\mathbb{R}$ which is preserved by the Hamiltonian flow, or equivalently, it stands in involution with $H$. This means that $\{Q, H\} = 0$ where
$
    \{Q, H\}
    :=
    \sum_{i = 1}^n \left(   
        \partial_{q_i}Q\ \partial_{p_i}H - \partial_{p_i}Q\ \partial_{q_i}
    \right)
$
is the Poisson bracket associated to the standard symplectic form
$
    \omega=\sum_{i=1}^ndp_i\wedge dq_i
$
in $\mathbb{R}^{2n}$. By definition, the energy $H$ is such an integral, and we assume that there are others: we let the frozen system have a set of $k \le n$ independent integrals $\mathbf{H} := (H_1 = H,\ H_2, \dots, H_k)$ for all $\theta$, defined in a bounded (continuously dependent on $\theta$) region of $\mathbb{R}^{2n}$. The common level of the integrals is the set $M_{\mathbf{h},\theta}=\{y\in \mathbb{R}^{2n}:\mathbf{H}(y; \theta)=\mathbf{h}\}$. The independence of the integrals implies that the common levels are smooth $(2n-k)$-dimensional manifolds, which we also assume to be compact. We assume that the integrals $H_i$ stand in involution, $\{H_i,H_j\}=0$ for all $i,j=1,\dots,k$; it follows that the corresponding Hamiltonian vector fields $X_{H_i}$, $i=1,\dots,k$, restricted onto a manifold $M_{\mathbf{h},\theta}$, span a smooth $k$-dimensional integrable plane field.

At $\varepsilon\neq 0$, the functions $H_i$ are no longer preserved by the full system (\ref{Hamiltonian_ODE}), (\ref{thetaeq}) and become {\em slow variables} (along with the variable $\theta$). Their time evolution is described by the {\em slow subsystem}
\begin{equation}\label{slowsub}
\begin{array}{l}\displaystyle
\frac{d \mathbf{H} }{dt} = \{\mathbf{H}, H\} + \varepsilon \; \partial_\theta \mathbf{H} = \varepsilon \; \partial_\theta \mathbf{H},\\ 
\displaystyle
\qquad\frac{d \theta}{dt}=\varepsilon,
\end{array}
\end{equation}
which gives that $\mathbf{H}$ can acquire an order-$1$ increment over a time interval of order $\varepsilon^{-1}$. 

However, there may exist functions of the slow variables which 
are {\em almost preserved} for such long time-intervals. These functions are called {\em adiabatic invariants}. In a sense, for slowly time-dependent Hamiltonian systems the problem of finding adiabatic invariants replaces the problem of integrability, see \cite{ArKoNe}.

The existence of adiabatic invariants is well known for two classical cases: either when the frozen system is {\em ergodic} with respect to the Liouville measure on almost every level of constant energy $H$ and for almost every $\theta$ (this implies, in particular, that the frozen system has no additional integrals, i.e., $k=1$), or, on the other extreme, when the frozen system is {\em completely integrable}. 

In the completely integrable case, the compact common levels of the integrals are $n$-dimensional Lagrangian tori and in a neighbourhood of each torus there exist \cite{Ar} symplectic {\em action-angle} coordinates $(J,\phi)\in \mathbb{R}^n\times (\mathbb{R}^n/\mathbb{Z}^n)$ for which the frozen system reads
$$\dot J = 0, \qquad \dot\phi = \partial_J H.$$
When writing the full system (\ref{Hamiltonian_ODE}), (\ref{thetaeq}) in the action-angle variables, small corrections appear at $\varepsilon \neq 0$:  
$$\dot J = O(\varepsilon), \qquad \dot\phi = \partial_J H + O(\varepsilon), \qquad\; \dot\theta =\varepsilon,$$
which are $1$-periodic in $\phi$. If the twist condition $\det(\partial_{JJ} H)\neq 0$ holds for all $\theta$, then by averaging over the angles $\phi$, one establishes the classical result that
{\em the actions are adiabatic invariants}:
for a Lebesgue-typical initial condition, the value of the actions $J$ remains $O(\varepsilon)$-close to a constant
on time intervals of order $\varepsilon^{-1}$ at least \cite{Ar,ArKoNe}.
     
The actions $J$ are smooth functions of slow variables $h=\mathbf{H}(y;\theta)$ and $\theta$, and are defined as integrals of the Liouville form along the closed loops on the common level corresponding to the given values of $h$ and $\theta$. As the common level is a Lagrangian torus, the actions depend only on the homology class of the integration paths, and any $n$ pairwise non-homologous loops give $n$ independent action variables.

The other known case corresponds to the frozen system being {\em ergodic} with respect to the Liouville measure on almost every level of constant energy $H$, for almost every $\theta$ (this implies, in particular, that the frozen system has no additional integrals, i.e., $k=1$). Then, by Anosov-Kasuga theory \cite{ArKoNe,An,Ka}, {\em the volume $J(h,\theta)$ of the set bounded by the energy level $H(y,\theta)=h$ is an adiabatic invariant} (in a slightly weaker sense than in the completely integrable case, see Theorem \ref{adiabatic_invariant}). This result is particularly important for the foundations of statistical physics, as the logarithm $\ln J$ coincides with the {\em Gibbs volume entropy} of the system and its adiabatic invariance can be interpreted as the preservation of entropy at reversible adiabatic processes, one of the basic principles of thermodynamics. 

In this paper we attempt at building a theory of geometrically defined adiabatic invariants for the general partially-integrable case ($1 < k < n$),
which is intermediary between the two classical cases (see \cite{Lo}) $k = 1$ and $k = n$.
To the best of our knowledge, previous attempts found in literature are not in full generality. For example see Rugh \cite{Ru} where strict symmetry assumptions are assumed.
As the restrictions on the topology of the common levels are not well-understood beyond the completely integrable case, we avoid explicit topological assumptions and, instead, examine the relation between the existence of adiabatic invariants in the full system and the ergodic properties of the frozen system. The main result is as follows.

For a frozen system with $k$ independent integrals, we define action-like variables $J(h,\theta)$, which coincide with the classical actions at $k=n$ and with the Anosov-Kasuga invariant at $k=1$, and prove that {\em if the frozen system restricted to almost every common level of the $k$ integrals is ergodic with respect to the restriction of the Lebesgue measure, then the variables $J$ are adiabatic invariants of the full system}, see Theorem \ref{adiabatic_invariant}.

By Anosov theorem \cite{An}, when the frozen system is ergodic, in order to show that a slow variable is an adiabatic invariant, it is enough to show that it is an integral of the
{\em averaged system}
\begin{equation}\label{averageslow}
\frac{d}{d\theta} h = \int \partial_\theta \mathbf{H}(y;\theta) 
\mu_{_L}(dy),
\end{equation}
obtained from the slow subsystem (\ref{slowsub}) by averaging over the {\em Liouville measure}
\begin{equation}\label{lmmul}
\mu^{_L}_{(h, \theta)}= \frac{1}{\int\delta(\mathbf{h} - \mathbf{H}(y;\theta)) (dy)^{2n}}\; \delta(\mathbf{h} - \mathbf{H}(y;\theta)) (dy)^{2n}
\end{equation}
(the restriction of the Lebesgue volume to the common level $\{\mathbf{H}=\mathbf{h}\}$). So, we prove Theorem \ref{adiabatic_invariant} by showing that
{\em if the frozen system is ergodic on a dense set of common levels, then the actions $J$ are integrals of the averaged system (\ref{averageslow})}, see Theorem \ref{integral_for_the_averaged_system}. 

Quite surprisingly, beyond the classical cases $k=1$ and $k=n$, this result does not hold without the ergodicity assumption which is used in the proof of Theorem \ref{integral_for_the_averaged_system} in an essential way (moreover, one can built non-ergodic counterexamples to the statement of the theorem). This is important, as the averaged system is the object of study in the Hamiltonian dynamics in its own right. 

The physical intuition that necessitates the study of the averaged system is this: In situations where the fast variables are not observable, one needs to derive a closed system of differential equations that governs the evolution of the slow variables only. One does it by somehow averaging the slow subsystem (\ref{slowsub}) over the fast variables at constant values of $\mathbf{H}$ and $\theta$. It is not a priori given which measure on the constant level of the integrals $\mathbf{H}$ should be taken for such averaging, but the Liouville measure $\mu^{_L}$ appears to be the most natural choice. 

Averaging over $\mu^{_L}$ had been introduced systematically by Gibbs, who called it the {\em microcanonical ensemble}, and is widely and successfully used in statistical physics. One of its most basic facts - the so-called fundamental thermodynamic relation - can be rephrased as that the Gibbs volume entropy is the integral of the averaged equation for the slow evolution of energy. This corresponds to our $k=1$ case, and holds without the ergodicity assumption. 

The assumption of the ergodicity of the Liouville measure is typically used only as one of many possible theoretical justifications for the microcanonical ensemble averaging, but not for the study of the averaged slow dynamics. Our Theorem \ref{integral_for_the_averaged_system} show sthat in the partially-integrable case ($k \neq  1,n$) {\em the ergodicity of the frozen system plays also a different role}, not reported before - it influences the properties of the averaged slow system itself (supplies the averaged system with integrals).

Mathematically, this means that there is a certain connection between the statistical properties of a partially integrable Hamiltonian system and the algebraic properties (e.g. integrability) of the corresponding averaged system. It is possible that this connection has a geometric or topological nature, like in the completely integrable case where the existence of the adiabatic invariants is a consequence of the fact that the common levels are Lagrangian tori (so the action-angle variables can be introduced and the averaging over the angle variables can be performed). We do not have a good insight into this, but it seems to be a topic worth exploring, as it can reveal relations between the ergodicity/non-ergodicity of Hamiltonian dynamics and the symplectic geometry/topology of the common levels of the commuting integrals.

\section{Action-like integrals of the averaged slow system.}

We will view system (\ref{Hamiltonian_ODE}) with $\theta=\theta_0+\varepsilon t$
as a time-dependent Hamiltonian system. Denote its flow map from time $t_0$ to time $t$ as $\varphi_{t_0}^t$. These maps preserve the standard symplectic form
$$\omega = \sum_{i=1}^n dp_i \wedge dq_i$$
in $\mathbb{R}^{2n}$. We also consider the Liouville form
$$\alpha = \sum_{i=1}^n p_i dq_i, \qquad d \alpha =\omega.$$

Given a closed $(2s+1)$-dimensional smooth manifold $\Gamma$, we call its
(pre)contact volume a {\em surface action} of $\Gamma$:
$$I(\Gamma) := \int_\Gamma \alpha \wedge \omega^s.$$
Note that since $\iota_{_{X_H}}\omega = - dH$, where $X_H$ is the vector field defined by the Hamiltonian $H$ as in (\ref{Hamiltonian_ODE}), Stokes theorem implies
\begin{equation}\label{igcon}
\frac{d}{dt} I(\varphi_{t_0}^t \Gamma) = \int_{\varphi_{t_0}^t \Gamma} \iota_{_{X_H}} d(\alpha\wedge\omega^s) = - (s+1) \int_{\varphi_{t_0}^t \Gamma} d (H\; \omega^s)=0.
\end{equation}

Let $\mathcal{M}_{h,\theta}$ denote the manifold $\mathbf{H}(y;\theta)=h$ in $\mathbb{R}^{2n}$ (a common level of the integrals $H_1,\dots,H_k$). Since 
$\{H_i,H_j\}=0$, the value of $H_i$ remains constant along the orbits of the Hamiltonian flow of $H_j$ at $\varepsilon=0$, for all $i,j=1,\dots, k$. Therefore,
the vector fields $X_{H_j}$ are all tangent to the common level $\mathcal{M}_{h,\theta}$ at each of its points.

We have $\iota_{_{X_{H_i}}}\omega_{|_{\mathcal{M}(h,\theta)}}  = - {dH_i}|_{_{\mathcal{M}(h,\theta)}} =0$ for all $i=1,\dots, k$, i.e., the symplectic form $\omega$ vanishes at every $p\in \mathcal{M}_{h,\theta}$ on $k$ linearly independent vectors in $T_p\mathcal{M}_{h,\theta}$. As $\dim(\mathcal{M}_{h,\theta})=2n-k$, this implies
$$\omega^{n-k+1}|_{\mathcal{M}(h,\theta)}=0.$$

Thus, if two closed, smooth, codimension-$(k-1)$ submanifolds of $\mathcal{M}(h,\theta)$ belong to the same 
(ambient) cobordism class in $\mathcal{M}(h, \theta)$, e.g. if they are are isotopic, their surface actions are equal:
\begin{equation}\label{iiz}
\int_{\Gamma} \alpha\wedge \omega^{n-k} = \int_{\Gamma^\prime} \alpha\wedge \omega^{n-k}.
\end{equation}
This means that, for each cobordism class $c$ of codimension-$(k - 1)$ closed submanifolds $\Gamma \in \mathcal{M}(h, \theta)$, the surface action $I(\Gamma)$ is a function of $\theta, h$, and $c$ only.

\begin{definition} \label{definition_of_action_variables}
For a $(2n-2k+1)$-dimensional cobordism class $c$ of $\mathcal{M}(h,\theta)$, 
we define the action 
\[ J_c(h,\theta):= I(\Gamma)=\int_{\Gamma} \alpha\wedge \omega^{n-k},
\]
where $\Gamma$ is any closed $(2n-2k+1)$-dimensional submanifold of $\mathcal{M}(h,\theta)$ of class $c$.
\end{definition}

By the definition, for every fixed $c$, the action $J_c(\mathbf{H}(y; \theta), \theta)$ is a slow variable. In the completely integrable case, the number of integrals $k=n$, so $\dim(\Gamma)=1$. The common level $\mathcal{M}(h,\theta)$ is an $n$-torus, hence our action $J$ coincides (up to a constant factor) with the classical action $\int_{\Gamma} \alpha$ in this case. In the non-integrable case $k=1$, we have $\Gamma=\mathcal{M}(h,\theta)$, so the action is  
$$J(h,\theta)=\int_{H(y;\theta)=h} \alpha\wedge \omega^{n-1} = \int_{H(y;\theta)\leq h} \omega^n,$$
i.e., it equals to the Anosov-Kasuga adiabatic invariant (the volume below the energy level \cite{Ka}).

\begin{theorem} \label{integral_for_the_averaged_system}
Let the frozen system (\ref{Hamiltonian_ODE})
be ergodic with respect to the Liouville measure (\ref{lmmul}) on common levels $\mathcal{M}(h,\theta)$ for a dense set of $(h,\theta)$ in a ball $B\subset \mathbb{R}^k\times \mathbb{R}^1$. Then, for any cobordism class $c$ of closed, codimension-$(k-1)$ submanifolds of
$\mathcal{M}(h,\theta)$, the action $J_c(h,\theta)$ is an
integral of the slow averaged system (\ref{averageslow})
in $B$.
\end{theorem}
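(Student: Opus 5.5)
We need to show that along the averaged flow $\frac{dh}{d\theta}=\langle\partial_\theta\mathbf H\rangle_{\mu^L_{(h,\theta)}}$ the derivative of $J_c$ vanishes:
\[
\partial_\theta J_c+\sum_{i=1}^k\partial_{h_i}J_c\,\langle\partial_\theta H_i\rangle=0 .
\]
By continuity of both sides in $(h,\theta)$, it suffices to prove this identity on the dense set of ergodic levels. The plan is to compute the left side geometrically, as the rate of change of the surface action of a moving submanifold. Then we use ergodicity to replace the result by an average. Finally we show the average matches the right-hand side.

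\textbf{Step 1: variation formula.} Fix $(h,\theta)$ and a representative $\Gamma\subset\mathcal M(h,\theta)$ of class $c$. Consider any smooth family $\Gamma_\tau\subset\mathcal M(h(\tau),\theta(\tau))$ with $\Gamma_0=\Gamma$, generated by a vector field $V$ along $\Gamma$. The relation $d\mathbf H(V)+\partial_\theta\mathbf H\,\dot\theta=\dot h$ holds on $\Gamma$. By Cartan's formula and closedness of $\Gamma$,
\[
\frac{d}{d\tau}I(\Gamma_\tau)=(s+1)\int_\Gamma \iota_V\omega\wedge\omega^{s},\qquad s=n-k.
\]
\textbf{Step 2: exploit the freedom in $V$.} The vector $V$ is determined modulo $T\mathcal M$, and tangential changes do not affect the integral because $\omega^{s+1}|_{\mathcal M}=0$. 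We then use the flows $\varphi^{\mathbf t}$ of the commuting fields $X_{H_j}$, which preserve $\mathcal M$, $\omega$, and the class $c$. The key identity $\iota_V\omega=-\sum_j (\ldots)dH_j$-type decompositions let us write $\iota_V\omega\wedge\omega^s|_\Gamma$ as $\sum_i \dot h_i\,\beta_i-\dot\theta\,\partial_\theta$-terms. Concretely, choosing $V=\sum_i a_i\nabla$-type normal fields, the $dJ_c$ computation reduces to
\[
dJ_c=(s+1)\sum_i\Big(\int_\Gamma \eta_i\wedge\omega^s\Big)(dh_i-\partial_\theta H_i\,d\theta),
\]
where $\eta_i$ is a $1$-form on $\mathcal M$ dual to $H_i$ in the normal direction, i.e. $\iota_{N_i}\omega$ with $dH_j(N_i)=\delta_{ij}$. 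The catch is that $\partial_\theta H_i$ is a function on $\Gamma$, not a constant. The true identity is therefore
\[
\partial_\theta J_c=-(s+1)\sum_i\int_\Gamma \partial_\theta H_i\,\eta_i\wedge\omega^s .
\]
\textbf{Step 3: averaging via ergodicity.} This is the main obstacle: we must show $\int_\Gamma f\,\eta_i\wedge\omega^s=\langle f\rangle\int_\Gamma\eta_i\wedge\omega^s$ for $f=\partial_\theta H_i$. Replacing $\Gamma$ by $\varphi^t_H\Gamma$ (same class $c$), the integral becomes $\int_\Gamma (f\circ\varphi^t)\,\varphi^{t*}(\eta_i\wedge\omega^s)$. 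The form $\varphi^{t*}\eta_i-\eta_i$ lies in the span of $dH_j$ plus exact terms, which vanish against $\omega^s$ on $\Gamma$ up to an integral of $d(\cdot)$; this part needs care. Time-averaging over $t\in[0,T]$ and applying the $L^1$ ergodic theorem on $\mathcal M$ then gives the following. Disintegrate a thickened family of representatives, namely $\Gamma$ translated by the $k$-dimensional $\mathbb R^k$-action foliated by $X_{H_j}$, into a measure absolutely continuous with respect to Liouville. The time averages $\frac1T\int_0^T f\circ\varphi^t$ converge to $\langle f\rangle$ in $L^1$ of this measure. This yields the factorization on ergodic levels.

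\textbf{Step 4: conclusion.} Substituting the factorization shows that $\partial_\theta J_c$ equals minus $\sum_i\langle\partial_\theta H_i\rangle\partial_{h_i}J_c$ on the dense set. Continuity then extends the identity to all of $B$, so $J_c$ is an integral of (\ref{averageslow}).
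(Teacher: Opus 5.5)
\textbf{Gap: the per-index factorization in Step 3 is false.} Your first-variation computation reaches correct formulas, $\partial_{h_i}J_c=(s+1)\int_\Gamma\eta_i\wedge\omega^s$ and $\partial_\theta J_c=-(s+1)\sum_i\int_\Gamma\partial_\theta H_i\,\eta_i\wedge\omega^s$. Step 3, however, then aims at a false statement. The identity $\int_\Gamma\partial_\theta H_i\,\eta_i\wedge\omega^s=\langle\partial_\theta H_i\rangle\int_\Gamma\eta_i\wedge\omega^s$ fails in general, even on ergodic levels. The move ``replace $\Gamma$ by $\varphi^t_H\Gamma$ (same class $c$)'' is not legitimate term by term: on $\mathcal M$ one has $d(\partial_\theta H_i\,\eta_i\wedge\omega^s)=d(\partial_\theta H_i)\wedge\eta_i\wedge\omega^s$, which is in general nonzero, so the individual integrals are not invariants of the class. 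Only the sum $\sum_i\partial_\theta H_i\,\eta_i\wedge\omega^s$ is closed on $\mathcal M$, being the integrand of $-\partial_\theta J_c/(s+1)$.

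Here is a concrete counterexample. Take $n=k=2$ near a Liouville torus (e.g.\ two uncoupled oscillators) in action--angle variables, $\omega=dJ_1\wedge d\phi_1+dJ_2\wedge d\phi_2$, with $H_1=aJ_1+bJ_2$, $H_2=J_2$ and $a/b\notin\mathbb Q$, so the frozen flow is ergodic on every level. Let $\theta$ enter by composing both functions with the time-$\theta$ map of $X_G$, $G=\sin\phi_2$. At $\theta=0$ you get:
\begin{itemize}
\item $\partial_\theta H_1=-b\cos\phi_2$ and $\partial_\theta H_2=-\cos\phi_2$, both of zero Liouville mean;
\item $\eta_1=a^{-1}d\phi_1$ and $\eta_2=d\phi_2-(b/a)\,d\phi_1$;
\item on the loop $\Gamma_c=\{\phi_2=c\}$, $\int_{\Gamma_c}\partial_\theta H_1\,\eta_1=-2\pi(b/a)\cos c$.
\end{itemize}
This last integral is nonzero and changes along $\varphi^t_H\Gamma_c=\Gamma_{c+bt}$; it is cancelled exactly by the $i=2$ term.

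\textbf{Repair.} Run your averaging argument on the single form $\Theta=\sum_i(\partial_\theta H_i-\langle\partial_\theta H_i\rangle)\,\eta_i\wedge\omega^s$. Its integral over every representative of $c$ equals $-(s+1)^{-1}\,dJ_c(\overline h(\theta),\theta)/d\theta$, so it is class invariant. Two further points must then be argued rather than asserted.
\begin{enumerate}
\item The step you flag as ``needing care'' is in fact exact. Because $\omega^{s+1}|_{\mathcal M}=0$, the form $\eta_i\wedge\omega^s|_{\mathcal M}$ does not depend on the choice of $N_i$ modulo $T\mathcal M$. The flows of the $X_{H_j}$ preserve every $H_l$, so this form is pointwise invariant under $\varphi^{\mathbf t}$, with no exact correction terms.
\item The absolute continuity of the thickened measure needs a reason. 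Let $L=\mathrm{span}\{X_{H_j}\}=\ker(\omega|_{T\mathcal M})$. The map $(\mathbf t,x)\mapsto\varphi^{\mathbf t}(x)$ is a submersion at $x\in\Gamma$ iff $T_x\Gamma\cap L_x$ is a line. By a dimension count this intersection always contains at least a line. If it had dimension $\ge 2$, then $\omega|_{T_x\Gamma}$ would have rank $\le 2s-2$, which kills $\eta_i\wedge\omega^s|_{T_x\Gamma}$. Hence the map is a submersion on the support of the density $\eta_i\wedge\omega^s|_\Gamma$, the push-forwards have $L^1$ densities with respect to Liouville, and Birkhoff plus dominated convergence finishes the argument.
\end{enumerate}

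\textbf{Comparison with the paper.} Once repaired, yours is a genuinely different proof. The paper never differentiates $J_c$ under the integral. It pushes $\Gamma_0$ forward by the full non-autonomous flow, projects onto the level of the averaged solution, and pulls back. It then bounds $I(\Gamma_3)-I(\Gamma_0)$ via Stokes by the round-trip displacement $\tilde h-h_0\approx\varepsilon\tau\,(\overline{\partial_\theta\mathbf H}_{\mathcal M}-\langle\partial_\theta\mathbf H\rangle_\tau)$, and perturbs $\Gamma_0$ off the badly equidistributed set. Your version avoids the $\varepsilon\to0$ limit and the exponential-growth estimates. It also shows explicitly that ergodicity enters only through the class invariance of the summed form together with the flow invariance of $\eta_i\wedge\omega^s$.
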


\begin{proof}
Let $\overline h(\theta)$ be the solution of the averaged system (\ref{averageslow}), corresponding to 
the initial condition $(h_0, \theta_0)\in B$. We have
\begin{equation}\label{hslow}
\overline h = h_0 + (\theta-\theta_0)\;
\overline{\partial_\theta\mathbf{H}}_{\mathcal{M}} + o(\theta-\theta_0),
\end{equation}
where we denoted by $\overline{\{\;\}}_{\mathcal M}$ the averaging over the Liouville measure $\mu^{_L}_{(h_0,\theta_0)}$ on the manifold $\mathcal{M}:=\mathcal{M}(h_0,\theta_0)$. By
(\ref{hslow}), $\overline h$
stays in a neighbourhood of $h_0$ of size $O(\theta-\theta_0)$, so we remain in $B$ for $|\theta-\theta_0|$ small enough.
        
Our objective is to show that
\[\left. 
\dfrac{d}{d\theta} J_c(\overline h(\theta), \theta)\right|_{\theta = \theta_0} = 0
\]
whenever ergodicity holds on $\mathcal{M}$. Then, since $J_c$ is smooth and the ergodicity holds for a dense set of $(h_0,\theta_0)\in B$, this would give $\dfrac{d}{d\theta} J_c(\overline h(\theta),\theta) = 0$ for all $(h,\theta)\in B$.

By definition
\begin{equation}\label{jc0}
J_c(h_0,\theta_0) = I(\Gamma_0)
\end{equation}
for any smooth, class-$c$ submanifold $\Gamma_0$ of $\mathcal{M}$. 
Take any $t$ and let $\Gamma_1(t;\Gamma_0):= \varphi_{0}^{t}(\Gamma_0)$, where $\varphi_{t_0}^{t}(y_0)$ is the time-dependent flow of the full system (\ref{Hamiltonian_ODE}),(\ref{thetaeq}) with initial condition $(t_0, y_0)$. As one can see from (\ref{slowsub}), $\Gamma_1$ lies in an $O(\varepsilon t)$-neighbourhood of $\mathcal{M}$.

Let $\mathcal{N}$ be a tubular neighborhood of $\mathcal{M}$ in $\mathbb{R}^{2n}$. We coordinatise 
$\mathcal{N}$ as $y=(h, z) \in \mathbb{R}^k \times \mathcal{M}$ where 
$h = \mathbf{H}(y; \theta)$ and $z$ is a smooth projection to $\mathcal{M}$. 
Take a projection map 
$\overline \pi_t (h,z) := (\overline h(\overline\theta(t)),z)$,
where we denote $\overline\theta(t):=\theta_0 + \varepsilon t$,
and define $\Gamma_2(t;\Gamma_0) = \overline\pi_t(\Gamma_1)$. 
By construction, $\overline\pi_t$ is $O(\varepsilon t)$-close to identity, so
$\Gamma_2$ is $O(\varepsilon t)$-close to $\Gamma_1$.
\begin{center}
        \begin{tikzpicture}
                \newcommand{\drawGamma}[0]{
                    \draw[thick] (0ex, 0ex) ellipse (5ex and 3ex);
                }
                \newcommand{\drawGammaone}[0]{
                    \draw[custom_green]
                    (5ex, 0ex)
                        .. controls +(0ex, 1ex) and +(0.5ex, 1ex) ..
                    (4.5ex, 1.3ex)
                        .. controls +(-0.5ex, -1ex) and +(0.1ex, -1ex) ..
                    (4ex, 1.7ex)
                        .. controls +(-0.1ex, 1ex) and +(0.25ex, 1ex) ..
                    (3.5ex, 2ex)
                        .. controls +(-0.25ex, -1ex) and +(0.1ex, -1ex) ..
                    (3ex, 2.4ex)
                        .. controls +(-0.1ex, 1ex) and +(0.1ex, 1ex) ..
                    (2.5ex, 2.7ex)
                        .. controls +(-0.1ex, -1ex) and +(0ex, -1ex) ..
                    (2ex, 2.8ex)
                        .. controls +(0ex, 1ex) and +(0ex, 1ex) ..
                    (1.5ex, 2.9ex)
                        .. controls +(0ex, -1ex) and +(0ex, -1ex) ..
                    (1ex, 2.95ex);
                \draw[custom_red]
                    (1ex, 2.95ex)
                        .. controls +(0ex, 5ex) and +(0ex, 5ex) ..
                    (0.5ex, 2.95ex);
                \draw[custom_green]
                    (0.5ex, 2.95ex)
                        .. controls +(0ex, -1ex) and +(0ex, -1ex) ..
                    (0ex, 3ex)
                        .. controls +(0ex, 1ex) and +(0ex, 1ex) ..
                    (-0.5ex, 2.95ex)
                        .. controls +(0ex, -1ex) and +(0ex, -1ex) ..
                    (-1ex, 2.95ex)
                        .. controls +(0ex, 1ex) and +(0ex, 1ex) ..
                    (-1.5ex, 2.9ex)
                        .. controls +(0ex, -1ex) and +(0ex, -1ex) ..
                    (-2ex, 2.8ex)
                        .. controls +(0.1ex, 1ex) and +(0ex, 1ex) ..
                    (-2.5ex, 2.7ex)
                        .. controls +(0ex, -1ex) and +(0ex, -1ex) ..
                    (-3ex, 2.4ex)
                        .. controls +(0ex, 1ex) and +(0ex, 1ex) ..
                    (-3.5ex, 2ex)
                        .. controls +(0.1ex, -1ex) and +(0.1ex, -1ex) ..
                    (-4ex, 1.7ex)
                        .. controls +(-0.1ex, 1ex) and +(0.1ex, 1ex) ..
                    (-4.5ex, 1.3ex)
                        .. controls +(-0.1ex, -0.5ex) and +(0ex, -1.5ex) ..
                    (-5ex, 0ex)
                        .. controls +(-0ex, 1.5ex) and +(0ex, 1ex) ..
                    (-4.5ex, -1.3ex)
                        .. controls +(0ex, -1ex) and +(0ex, -1ex) ..
                    (-4ex, -1.7ex)
                        .. controls +(0ex, 1ex) and +(0ex, 1ex) ..
                    (-3.5ex, -2ex)
                        .. controls +(0ex, -1ex) and +(0ex, -1ex) ..
                    (-3ex, -2.4ex)
                        .. controls +(0ex, 1ex) and +(0ex, 1ex) ..
                    (-2.5ex, -2.7ex)
                        .. controls +(0ex, -1ex) and +(0ex, -1ex) ..
                    (-2ex, -2.8ex)
                        .. controls +(0ex, 1ex) and +(0ex, 1ex) ..
                    (-1.5ex, -2.9ex)
                        .. controls +(0ex, -1ex) and +(0ex, -1ex) ..
                    (-1ex, -2.95ex)
                        .. controls +(0ex, 1ex) and +(0ex, 1ex) ..
                    (-0.5ex, -2.95ex);
                \draw[custom_red]
                    (-0.5ex, -2.95ex)
                        .. controls +(0ex, -3.5ex) and +(0ex, -3.5ex) ..
                    (0ex, -3ex);
                \draw[custom_green]
                    (0ex, -3ex)
                        .. controls +(0ex, 1ex) and +(0ex, 1ex) ..
                    (0.5ex, -2.95ex)
                        .. controls +(0ex, -1ex) and +(0ex, -1ex) ..
                    (1ex, -2.95ex)
                        .. controls +(0ex, 1ex) and +(0ex, 1ex) ..
                    (1.5ex, -2.9ex)
                        .. controls +(0ex, -1ex) and +(0ex, -1ex) ..
                    (2ex, -2.8ex)
                        .. controls +(0ex, 1ex) and +(0ex, 1ex) ..
                    (2.5ex, -2.7ex)
                        .. controls +(0ex, -1ex) and +(0ex, -1ex) ..
                    (3ex, -2.4ex)
                        .. controls +(0ex, 1ex) and +(0ex, 1ex) ..
                    (3.5ex, -2ex)
                        .. controls +(0ex, -1ex) and +(0ex, -1ex) ..
                    (4ex, -1.7ex)
                        .. controls +(0ex, 1ex) and +(0ex, 1ex) ..
                    (4.5ex, -1.3ex)
                        .. controls +(0ex, -1ex) and +(0ex, -1ex) ..
                    (5ex, 0ex);
                }
                \newcommand{\drawCommonLevel}[0]{
                    \draw[thick] ( 0ex,  0ex) -- (40ex,  0ex) -- (50ex, 10ex) -- (10ex, 10ex) -- ( 0ex,  0ex);
                }
                \drawCommonLevel
                \node at (55ex, 5ex) {$\mathcal{M} = \mathcal{M}(\gamma(0))$};
                    \begin{scope}[shift = {+(11ex, 3.5ex)}]
                        \drawGamma
                        \node (Gamma0) at (-5.25ex, -2.25ex) {$\Gamma_0$};
                    \end{scope}
                    \begin{scope}[shift = {+(17ex, 6ex)}]
                        \drawGamma
                        \node (Gamma4) at (7ex, -4ex) {$\Gamma_4$};
                        \draw[dash pattern=on 1pt off 1pt] (5.75ex, -4ex) -- (4.3ex, -1.8ex);
                        
                        \drawGammaone
                        \node (Gamma3) at (2.5ex, 6ex) {$\Gamma_3$};
                    \end{scope}
                    
                \begin{scope}[shift = {+(10ex, 30ex)}, rotate = -15]
                    \drawCommonLevel
                    \node at (55ex, 5ex) {$\mathcal{M} = \mathcal{M}(\gamma(\epsilon T))$};
                    \begin{scope}[shift = {+(11ex, 3.5ex)}]
                        \drawGamma
                        \node (barGamma0) at (2ex, 4.5ex) {$\pi_t(\Gamma_0)$};
                    \end{scope}
                    \begin{scope}[shift = {+(36ex, 6ex)}]
                        \drawGamma
                        \node (Gamma2) at (-9ex, -4ex) {$\Gamma_2$};
                        \draw[dash pattern=on 1pt off 1pt] (-8ex, -3ex) -- (-4.9ex, -0.9ex);
                        
                        \drawGammaone
                        \node (Gamma1) at (2.5ex, 6ex) {$\Gamma_1$};
                    \end{scope}
                \end{scope}
                \draw[custom_blue, thick, ->]
                    {(Gamma0) + (5ex, 6ex)}
                        .. controls +(0ex, 10ex) and +(-10ex, 2ex) ..
                        node [pos = 0.75, shift = {(-3ex, 1ex)}] (){$\varphi_0^t$}
                    (Gamma1);
                \draw[custom_blue, thick, ->]
                    {(Gamma2) + (0ex, -1ex)}
                        .. controls +(0ex, -10ex) and +(5ex, 0ex) ..
                        node [pos = 0.5, shift = {(9ex, 0ex)}] (){$\varphi_t^{-t} = (\varphi_0^t)^{-1}$}
                    (Gamma3);
        \end{tikzpicture}
\end{center}

By definition
\[J_c(\overline h(\overline\theta(t)), \overline\theta(t)) = I(\overline\pi_t(\Gamma_0)).
\]
Since $\overline \pi_t \circ \varphi_{0}^{s} \ (0 \le s \le t)$ gives an isotopy 
between $\overline \pi_t (\Gamma_0)$ and $\Gamma_2$ in 
$\mathcal{M}((\overline h(\overline\theta(t)), \overline \theta(t)))$, we have
\[J_c(\overline h(\overline \theta(t)), \overline \theta(t)) = I(\Gamma_2),
\]
see (\ref{iiz}).

Let $\Gamma_3(t;\Gamma_0):= \varphi_{t}^{-t}(\Gamma_2)$. For any fixed $t$, we have 
$\Gamma_2\to \Gamma_1$ as $\varepsilon\to 0$, so $\Gamma_3$ is close to $\Gamma_0 = \varphi_{t}^{-t}(\Gamma_1)$, in smooth topology.
Since $\varphi_t^{-t}$ is exact 
symplectic, we have $I(\Gamma_2) = I(\Gamma_3)$, see (\ref{igcon}). Therefore,
\begin{equation}\label{jc3} J_c(\overline h(\overline \theta(t)), \overline \theta(t)) = I(\Gamma_3(t;\Gamma_0))
\end{equation}
for any choice of $\Gamma_0$ in the cobordism class $c$.

\begin{lemma} \label{delta_I_estimate_lemma}
If the flow of the frozen system on $\mathcal{M}$ is ergodic, there exists a constant 
$C > 0$ such that for any $\eta > 0$, there exists $\tau > 0$ such that, 
for an appropriate choice of $\Gamma_0$ in the class $c$,
\begin{align} \label{delta_I_estimate}
|I(\Gamma_3(\tau;\Gamma_0)) - I(\Gamma_0|\; \le \;C \eta \;\varepsilon \tau,
\end{align}
for all sufficiently small $\varepsilon > 0$.
\end{lemma}

This lemma implies the theorem. Indeed, by (\ref{jc0}),(\ref{jc3}), we obtain from 
(\ref{delta_I_estimate})
\[\left|\dfrac{dJ_c}{d\theta}\right|_{\theta = \theta_0} =\;
\left|\lim_{\varepsilon \rightarrow 0} \dfrac{I(\Gamma_3(\tau;\Gamma_0)) - I(\Gamma_0)}{\varepsilon \tau}\right|\le C \eta,
\]
and the theorem follows as $\eta > 0$ can be chosen arbitrarily small.\\

\textit{Proof of Lemma \ref{delta_I_estimate_lemma}.} 
In the coordinates $(h,z)$, system (\ref{Hamiltonian_ODE}),(\ref{thetaeq}) is 
a slow-fast system of the form
\begin{equation}\label{sfs}
\frac{dh}{dt} = \varepsilon F(h,z,\theta_0+\varepsilon t), \qquad
\frac{dz}{dt} = G(h,z,\theta_0+\varepsilon t), 
\end{equation}
where
\begin{equation}\label{fth}
F = \partial_\theta \mathbf{H}.
\end{equation}
Obviously, 
\begin{equation}\label{hh0}
h(t) - h_0 = O(\varepsilon t).
\end{equation}

As $F$ and $G$ are bounded in $C^1$, we have that the derivatives of $(h,z)$
with respect to the initial conditions have at most exponential growth:
\begin{equation}\label{derest}
\partial_{(h_0,z_0)}\; h(t) = O(\varepsilon e^{K_0 |t|}), \qquad 
\partial_{(h_0,z_0)}\; z(t) = O(e^{K_0 |t|}),
\end{equation}
with some constant $K_0$. 

We also have that $z(t)$ depends in a Lipshitz way on the right-hand sides, 
with the Lipshitz constant that grows at most exponentially in $t$.
In particular, we can compare $z(t)$ with the solution $z^f(t; h_0,z_0,\theta_0)$ of the fast subsystem
\begin{equation}\label{fsu0}
\frac{dz}{dt} = G^f(z;h_0,\theta_0) := G(h_0,z,\theta_0) 
\end{equation}
with the same initial conditions: since $G(h(t), z, \theta_0 +\varepsilon t)=G^f(z;h_0,\theta_0) 
+ O(\varepsilon |t|)$ by (\ref{hh0}), we obtain
$$
z(t)=z^f(t) + O(\varepsilon e^{K_0|t|}).
$$
This implies $F(h,z(t),\theta_0+\varepsilon t) = F(h_0, z^f(t), \theta_0) + O(\varepsilon e^{K_0|t|})$,
so integrating the equation for $h$ in (\ref{sfs}) gives
\begin{equation}\label{hhsf}
h(t)=h_0 + \varepsilon \int_0^tF(h_0, z^f(s), \theta_0) ds + O (\varepsilon^2 e^{K_1|t|}),
\end{equation}
with some constant $K_1>K_0$.

Now, take a point $(h_0,z_0)\in \Gamma_0$. Let 
$P^\prime=(h^\prime,z^\prime)=\varphi_0^t(h_0,z_0) \in \Gamma_1$ and consider its projection
$P^{\prime\prime}=(h^{\prime\prime}=\overline h,z^\prime)=\overline\pi_t P^\prime\in \Gamma_2$.
Both $h^\prime$ and $\overline{h}$ are $O(\varepsilon t)$-close to $h_0$, so 
$P^\prime$ and $P^{\prime\prime}$ are $O(\varepsilon t)$-close. Therefore, 
by (\ref{derest}), their backward orbits $\varphi_t^{-s}P^\prime$ and
$\varphi_t^{-s}P^{\prime\prime}$  stay $O(\varepsilon e^{K_1|t|})$ close
for $s\in [0,t]$. Therefore, integrating the equation for $h$ in (\ref{sfs}), we find
$$\tilde h - h_0  = \overline{h} - h^{\prime} + O(\varepsilon^2 e^{K_1|t|}),$$
where we denote $(\tilde h, \tilde z): = \varphi_t^{-t}P^{\prime\prime}\in\Gamma_3$
(recall also that $\varphi_t^{-t}P^\prime=(h_0,z_0)$).

By (\ref{hhsf}),(\ref{fth}),(\ref{hslow}), we conclude 
\begin{equation}\label{mainftr}
\tilde h - h_0 = \varepsilon t \;\left[\;\overline{\partial_\theta \mathbf{H}}_{\mathcal M}\; - \left< \partial_\theta \mathbf{H} \right>_t + o(1)_{\varepsilon\to 0}\right],
\end{equation}
where we use the notation $\left<\;\right>_t$ for the time-average over the solution of the fast system (\ref{fsu0}) on the interval $[0,t]$:
$$\left<\partial_\theta \mathbf{H} \right>_t = \frac{1}{t} \int_0^t \partial_\theta \mathbf{H}|_{h=h_0, z=z^f(s;h_0,z_0,\theta_0),\theta=\theta_0} ds.$$

Consider the smooth manifold $W$ that connects $\Gamma_0: \{h=h_0\}$ and $\Gamma_3: \{h=\tilde h(h_0,z_0), z= \tilde z(h_0,z_0)\}_{(h_0,z_0)\in\Gamma_0}$:
$$W := \bigcup_{z\in \Gamma_0} \{h = h_0 + s (\tilde h(z_0) -h_0), z =
z_0 + s (\tilde z(z_0) -z_0)| \;s\in [0,1]\}, (z_0,h_0)\in \Gamma_0.$$
By Stokes theorem,
$$I(\Gamma_3) - I(\Gamma_0) = \int_{W} \omega^{n-k}.
$$
Because $\left.\omega^{n-k}\right|_{W}$ is a top form, it has a density with respect to the Hausdorff measure on $W$, i.e., the restriction onto $W$ of the Lebesgue volume $(dh)^k (dz)^{2n-k}$. This density is continuous and therefore bounded, uniformly for any manifold $C^1$-close to $W$. Thus, there exists a constant $C_1 > 0$, independent of the choice of $t$ and $\Gamma_0$, such that for all small $\varepsilon > 0$
\begin{equation}\label{wher}\left| I(\Gamma_3) - I(\Gamma_0)\right| \le
C_1 \int_{\Gamma_0} \|\tilde h(h_0,z_0) - h_0\| \;\;\mathcal{H}_{\Gamma_0}(dz_0),
\end{equation}
where the Hausdorff measure $\mathcal{H}_{\Gamma_0}$ is the restriction of the Liouville measure $\mu^{_L}\sim (dz)^{n-k}$ onto $\Gamma_0$.

By the ergodicity of the frozen system on $\mathcal{M}$, for any $\nu>0$ and $\delta>0$, there exists $\tau>0$ such that
$$\|\;\overline{\partial_\theta \mathbf{H}}_{\mathcal{M}} - \left<\partial_\theta \mathbf{H} \right>_\tau\|<\delta$$
for all initial conditions $z_0$ in a neighborhood of $\Gamma_0$ in $\mathcal{M}$ 
outside a set $\mathcal{S}$ of initial conditions such that $\mu^{_L}(\mathcal{S})<\nu$. Since $\mathcal{H}_{\Gamma_0}$ is the restriction of the smooth measure $\mu^{_L}$ 
on $\mathcal{M}$ to the smooth manifold $\Gamma_0$, it is easy to see that one can always make a $C^1$-small
deformation of $\Gamma_0$ to achieve
$$\mathcal{H}_{\Gamma_0} (\Gamma_0\cap \mathcal{S}) < C_2 \nu,$$
for some constant $C_2$ independent of $\nu$ and $\delta$. Then, by (\ref{mainftr}),(\ref{wher}),
$$\left| I(\Gamma_3) - I(\Gamma_0)\right| \le
C (\delta + \nu) \varepsilon \tau$$
for a constant $C$ independent of the choice of $\nu$ and $\delta$.
\end{proof}

Given a cobordism class $c$ of codimension-$(k-1)$ submanifolds of $\mathcal{M}_{h_0,\theta_0}$, the action $J_c$ is a well-defined function of $h$ and $\theta$ in a ball around $(h_0,\theta_0)$. One can extend it by continuity to any region of $(h,\theta)$ for which the integrals 
$\mathbf{H}$ remain independent for all $y\in \mathcal{M}_{h,\theta}$. Thus, $R_c(y,\theta):=J_c(\mathbf{H}(y,\theta),\theta)$ is well-defined for the corresponding region of $(y,\theta)$. Note that if this region is not simply-connected, the function $R$ may be multi-valued, as the cobordism class $c$ can change when making a round along a non-contractible loop in $h$-space.

\begin{theorem}\label{adiabatic_invariant} Let $U$ be a region such that at each $(h,\theta)\in U$ the gradients of the integrals $\mathbf{H}$ are linearly independent at each point of $\mathcal{M}_{h,\theta}$.  If the frozen system (\ref{Hamiltonian_ODE}) on $\mathcal{M}_{h,\theta}$ is ergodic for Lebesgue a.e. $(h,\theta)\in U$, then the action $R_c$ is an adiabatic invariant for the full system (\ref{Hamiltonian_ODE}),(\ref{thetaeq}) in the following sense:
 
For all $T, \delta, \nu > 0$ there exists $\varepsilon_0 > 0$ and a set $\mathcal{B} \subset \mathbf{U}:=\{(y,\theta)| (\mathbf H(y,\theta), \theta)\in U\}$ 
of small Lebesgue measure, i.e., $\mathrm{mes}(B) < \nu$, such that  for all $\varepsilon < \varepsilon_0$ and for any $(y_0,\theta_0) \in \mathbf{U}\setminus \mathbf{B}$
\[ \left|R_c(y(t); \theta_0+\varepsilon t) - R_c(y_0; \theta_0)\right| < \delta,\]
for all $t \in [0, T/\varepsilon]$ such that the solution  $(y(t),\theta_0+\varepsilon t)$ of system (\ref{Hamiltonian_ODE}),(\ref{thetaeq}) with the initial condition $(y_0,\theta_0)$ remains in $\mathbf{U}$.
\end{theorem}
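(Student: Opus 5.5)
The plan is to combine Theorem \ref{integral_for_the_averaged_system} with Anosov's averaging theorem \cite{An} for slow-fast systems whose fast subsystem is ergodic on almost every level of the slow variables. First I will rewrite the full system (\ref{Hamiltonian_ODE}),(\ref{thetaeq}) locally in the slow-fast form (\ref{sfs}). The slow variables are $(h,\theta)$, with $h=\mathbf{H}(y;\theta)$. The fast variable $z$ ranges over the common level $\mathcal{M}_{h,\theta}$, which is a compact manifold because the gradients of $\mathbf{H}$ are independent on $U$.

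The frozen fast system preserves the Liouville measure (\ref{lmmul}) on each $\mathcal{M}_{h,\theta}$. By hypothesis it is ergodic with respect to this measure for Lebesgue a.e. $(h,\theta)\in U$, so the set of ergodic levels is in particular dense in every ball inside $U$. Anosov's theorem then gives the following. For every $T,\delta',\nu'>0$ there is $\varepsilon_0$ and an exceptional set of initial conditions of Lebesgue measure $<\nu'$ such that, outside this set and for $\varepsilon<\varepsilon_0$, the true slow trajectory $(h(t),\theta_0+\varepsilon t)$ stays $\delta'$-close to the solution $\overline h(\theta)$ of the averaged system (\ref{averageslow}) for $t\in[0,T/\varepsilon]$, as long as the trajectory remains in the region.

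To apply Anosov's theorem globally rather than only in the coordinate patches of the tubular neighbourhood construction, I will cover $U$ by countably many balls $B_i$, each small enough that the coordinates $(h,z)$ apply and $J_c$ is a well-defined single-valued branch of $R_c$. Exhausting $U$ by compact sets, I may restrict to finitely many balls at the cost of a set of measure $<\nu/2$. Anosov's theorem should be applied to the slow-fast system on the whole compact part of $\mathbf U$, treated as a fibre bundle over the slow variables, so that $\overline h$ is defined along the entire time interval. Its hypotheses are smoothness, compactness of the fibres, an invariant smooth fibre measure and ergodicity a.e., and all of these hold.

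Next I apply Theorem \ref{integral_for_the_averaged_system} in each ball $B_i$. Its density hypothesis is met, so $J_c$ is constant along averaged trajectories in $B_i$. Passing from ball to ball by continuity of the branch of $R_c$, $R_c$ is constant along every averaged trajectory in $U$ (along the continued branch). On the relevant compact set $J_c$ is uniformly continuous, with modulus $\omega$. Hence
\[
|R_c(y(t),\theta_0+\varepsilon t)-R_c(y_0,\theta_0)|\le |J_c(h(t),\theta(t))-J_c(\overline h(\theta(t)),\theta(t))|\le \omega(\delta'),
\]
and choosing $\delta'$ with $\omega(\delta')<\delta$ gives the claim.

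The main obstacle is the interface with Anosov's theorem near the boundary of the compact part of $U$. Trajectories may exit it, and the statement only covers times during which the solution stays in $\mathbf U$. I would handle this by shrinking to a compact $K\subset U$ whose complement has small measure, and then working with trajectories up to their first exit from $K$. Here one must check that Anosov's estimate, which is usually stated for trajectories of the averaged system remaining in a compact set, still yields closeness up to the exit time. The argument will also use the fact that the full flow preserves Lebesgue measure in $(y,\theta)$-slices up to $O(\varepsilon t)$ distortion, so the exceptional set $\mathcal B$ can be taken to be the set of initial conditions for which Anosov's estimate fails, and its measure stays below $\nu$.
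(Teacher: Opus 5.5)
Your proposal follows the paper's own proof: Anosov's averaging theorem \cite{An} keeps $h(t)$ close to the averaged trajectory $\overline h$ for all initial data outside a set of small measure. Theorem~\ref{integral_for_the_averaged_system}, which applies because a.e.\ ergodicity gives density in every ball, makes $J_c$ constant along $\overline h$, and continuity of $J_c$ transfers this to $R_c(y(t),\theta_0+\varepsilon t)$. The compact exhaustion, branch bookkeeping and exit-time discussion you add are refinements the paper leaves implicit, and the residual issue you flag (trajectories that leave $\mathbf K$ but remain in $\mathbf U$ before time $T/\varepsilon$) is not treated there either.
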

\begin{proof}
The theorem follows from Theorem \ref{integral_for_the_averaged_system} and Anosov ergodic averaging theorem \cite{An}. Namely, by \cite{An}, given a
slow-fast system (e.g., system (\ref{sfs})) with a smooth invariant measure (in our case -- the restriction of the Lebesgue measure onto $\mathcal{M}$), 
if the frozen fast subsystem is ergodic for a.e. values of the slow variavles, then for any $T>0$ the slow component of the solution stays as close as we want to the solution of the averaged system for all initial contions but from a set of measure as small as we want, provided $\varepsilon$ is taken small enough. 
In our case, this means that $h(t) = \mathbf{H}(y(t),\theta_0+\varepsilon t)$ stays close to the solution $\bar h(t)$ of the averaged system (\ref{averageslow}), hence $R_c(y(t), \theta_0+\varepsilon t) = J_c(h(t), \theta_0+\varepsilon t)$ stays close to $J_c(\bar h(t), \theta_0+\varepsilon t)$,
and the latter is constant by  Theorem \ref{integral_for_the_averaged_system}.
\end{proof}

\textbf{Acknowledgements.} The authors are very grateful for support that was available to them. Amir Khodaeian was funded by the Roth Scholarship from Department of Mathematics at Imperial College. Konstantinos Kourliouros was supported by the EPSRC grant EP/Y020669/1.

\end{document}